\numberwithin{equation}{section}
\theoremstyle{plain}
\newtheorem{Theorem}{Theorem}[section]
\newtheorem{Lemma}[Theorem]{Lemma}
\theoremstyle{remark}
\newtheorem{Rem}[Theorem]{Remark}
\theoremstyle{definition}
\DeclareMathOperator{\R}{\mathbb{R}}
\DeclareMathOperator{\Prob}{\mathbb{P}}
\DeclareMathOperator{\E}{\mathbb{E}}
\DeclareMathOperator{\1}{\mathbbm{1}}
\newcommand{\todistr}{\overset{{\rm d}}{\underset{n\to\infty}\longrightarrow}}
\newcommand{\todistrt}{\overset{{\rm d}}{\underset{t\to\infty}\longrightarrow}}
\newcommand{\toweak}{\underset{n\to\infty}{\Rightarrow}}
\newcommand{\toweakt}{\underset{t\to\infty}{\Rightarrow}}
\newcommand{\tp}{\overset{\Prob}{\to}}
\DeclareMathOperator{\N}{\mathbb{N}}
\newcommand{\mm}{\mathcal{Z}}
\newcommand{\mn}{\N}
\newcommand{\mr}{\mathbb{R}}
\begin{document}

\title[Profiles of random recursive trees]{A functional limit theorem for the profile of random recursive trees}
\author{Alexander Iksanov}
\address{Faculty of Computer Science and Cybernetics, Taras Shevchenko National University of Kyiv, 01601 Kyiv, Ukraine}
\email{iksan@univ.kiev.ua}
\author{Zakhar Kabluchko}
\address{Institut f\"{u}r Mathematische Statistik, Westf\"{a}lische Wilhelms-Universit\"{a}t M\"{u}nster,
48149 M\"{u}nster, Germany}
\email{zakhar.kabluchko@uni-muenster.de}
\subjclass[2010]{Primary: 60F17, 60J80. Secondary:
60G50, 60C05, 60F05} \keywords{Biggins martingale; branching
random walk; central limit theorem; law of the iterated logarithm}
\begin{abstract}
Let $X_n(k)$ be the number of vertices  at level $k$ in a random
recursive tree with $n+1$ vertices. We prove a functional limit
theorem for the vector-valued process $(X_{[n^t]}(1),\ldots,
X_{[n^t]}(k))_{t\geq 0}$, for each $k\in\mathbb N$. We show that
after proper centering and normalization, this process converges
weakly to a vector-valued Gaussian process whose components are
integrated Brownian motions. This result is deduced from a
functional limit theorem for Crump-Mode-Jagers branching processes
generated by increasing random walks with increments that have
finite second moment.
\end{abstract}

\noindent \keywords{Branching random walk; Crump-Mode-Jagers
branching process; functional limit theorem; integrated Brownian
motion; low levels; profile; random recursive tree} \maketitle

\section{Introduction and main results}

A (deterministic) {\it recursive tree} with $n$ vertices is a
rooted tree with vertices labeled with $1,2\ldots, n$ that
satisfies the following property: the root is labeled with $1$,
and the labels of the vertices on the unique path from the root to
any other vertex (labeled with $m\in\{2,\ldots, n\}$) form an
increasing sequence. There are $(n-1)!$ different recursive trees
with $n$ vertices, and we denote them $T_{1,n}, T_{2,n},\ldots,
T_{(n-1)!, n}$. A random object $\mathcal{T}_n$ is called {\it
random recursive tree} with $n$ vertices if
$$
\Prob\{\mathcal{T}_n=T_{i,n}\}=\frac{1}{(n-1)!},\quad i=1,2,\ldots, (n-1)!.
$$
A simple way to generate a random recursive tree is as follows. At
time $0$ start with a tree consisting of a single vertex (the
root) labeled  with  $1$. At each time $n$, given a
recursive tree with $n+1$ vertices, choose one vertex uniformly at
random and add to this vertex an offspring labeled by $n$. The
random tree obtained at time $n$ has the same distribution as
$\mathcal{T}_{n+1}$. We refer the reader to Chapter~6 of
\cite{Drmota:2009} for more information.

For $k\in\mn$, let $X_n(k)$ denote the number of vertices at level
$k$ in a random recursive trees on $n+1$ vertices. The level of a
vertex is, by definition, its distance to the root. The root is at
level $0$. The function  $k\mapsto X_n(k)$ is usually referred to
as the \textit{profile} of the tree. In Theorem 3 of
\cite{Fuchs+Hwang+Neininger:2006} it was shown by using analytic
tools that for any fixed $k\in \N$,
\begin{equation}\label{fuchs}
\frac{(k-1)!\sqrt{2k-1}\big(X_n(k)-(\log n)^k/k!\big)}{(\log
n)^{k-1/2}}~\todistr~ {\rm normal}(0,1).
\end{equation}
Furthermore, the uniform in $k=1,2,\ldots, o(\log n)$ rate of
convergence in the uniform metric was obtained. The profiles of
random recursive trees  (along with closely related binary search
trees) have been much studied at the central limit regime levels
$k(n) = \log n + c\sqrt{\log n} + o(\sqrt{\log n})$, $c\in\R$, and
at the large deviation regime levels of the form $k(n)\sim \alpha
n$, $\alpha>0$; see
\cite{chauvin_etal:2001,chauvin_etal:2005,drmota_etal:2008,jabbour:2001,kab_mar_sulz:2017}.
Apart from~\cite{Fuchs+Hwang+Neininger:2006}, we are aware of only
one work studying vertices of random recursive trees at a fixed
level. It is shown in \cite{backhausz_mori:2012} that the
proportion of vertices at level $k\in\N$ having more than $t\log
n$ descendants converges to $(1-t)^k$ a.s. Also, a Poisson limit
theorem is proved in \cite{backhausz_mori:2012} for the number of
vertices at fixed level $k$ that have a fixed number of
descendants.

In this paper we are interested in weak convergence of the random
process $\big(X_{[n^t]}(1),\ldots, X_{[n^t]}(k)\big)_{t\geq 0}$
for each $k\in\mn$, properly normalized and centered, as
$n\to\infty$. The latter vector might be called the {\it low
levels profile}.
\begin{Theorem}\label{main2}
The following functional limit theorem holds for the low levels
profile of a random recursive tree:
\begin{equation}\label{clt5}
\left( \frac{(k-1)!\big(X_{[n^{(\cdot)}]}(k)-((\log n)\cdot)^k/k!
\big)}{(\log n)^{k-1/2}}\right)_{k\in\mn}~\toweak~
\bigg(\int_{[0,\,\cdot]} (\cdot-y)^{k-1}{\rm
d}B(y)\bigg)_{k\in\mn}
\end{equation}
in the product $J_1$-topology on $D^{\N}$, where $(B(u))_{u\geq
0}$ is a standard Brownian motion and $D=D[0,\infty)$ is the
Skorokhod space.
\end{Theorem}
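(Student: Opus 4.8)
The plan is to realize the random recursive tree as a Crump-Mode-Jagers (CMJ) branching process generated by an increasing random walk, thereby reducing Theorem~\ref{main2} to the functional limit theorem for CMJ processes announced in the abstract. Recall the classical embedding: if we run the standard construction of $\mathcal{T}_{n+1}$ in continuous time, attaching to each existing vertex a new child at rate $1$ (so that a vertex present at time $s$ has, after time $t-s$, a number of children that is Poisson with parameter $t-s$), then by time $T_n := \inf\{t : \text{tree has } n+1 \text{ vertices}\}$ the tree has exactly the law of $\mathcal{T}_{n+1}$, and the birth times of successive vertices form a standard Poisson process. Equivalently, each edge of the tree carries an independent $\mathrm{Exp}(1)$ length; the vertex at level $k$ born via edges of total length $s_1+\dots+s_k$ appears at time $s_1+\dots+s_k$. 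Thus the number of vertices at level $k$ present at continuous time $t$ is exactly the $k$-fold iterated renewal (CMJ) count
\begin{equation*}
Y_k(t) \;=\; \sum_{j_1\geq 1}\sum_{j_2\geq 1}\cdots\sum_{j_k\geq 1}\1_{\{S^{(1)}_{j_1}+\dots+S^{(k)}_{j_k}\,\leq\, t\}},
\end{equation*}
driven by i.i.d.\ $\mathrm{Exp}(1)$ increments, and $X_n(k) = Y_k(T_n)$. The first step is therefore to invoke (the earlier-stated) functional limit theorem for such $(Y_1(t),\dots,Y_k(t))$: after centering by $t^k/k!$ and scaling by $t^{k-1/2}$, the vector-valued process converges in the product $J_1$-topology to $\bigl(\int_{[0,t]}(t-y)^{k-1}\,\mathrm{d}B(y)/(k-1)!\bigr)_k$, the integrated Brownian motions appearing on the right of~\eqref{clt5}. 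Running time $t$ over $t \mapsto t\log n$ (to match the $[n^{(\cdot)}]$ scaling, since $\log[n^\tau]\sim \tau\log n$) and using the scaling self-similarity of the limit, this gives the functional CLT for $(Y_1(t\log n),\dots)_{t}$ with the normalization in~\eqref{clt5}.

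The second step is to transfer this from the continuous-time clock $t\log n$ to the discrete vertex count, i.e.\ to replace $Y_k(\cdot)$ by $X_{[n^{(\cdot)}]}(k) = Y_k(T_{[n^{(\cdot)}]})$. Here one uses the strong law $T_n/\log n \to 1$ a.s.\ (indeed $T_n - \log n$ is a.s.\ bounded fluctuation of order $\sqrt{\log\log n}$, but the crude a.s.\ limit suffices), which gives $T_{[n^\tau]} \approx \tau \log n$ uniformly on compacts in $\tau$. The point is that plugging the random time change $\tau \mapsto T_{[n^\tau]}/\log n \to \tau$ into a process that converges in $J_1$ to a limit with \emph{continuous} sample paths (integrated Brownian motion is continuous) does not disturb the weak limit: one applies a random-time-change / composition lemma for $J_1$-convergence, exploiting that the limit's continuity makes $J_1$ locally behave like uniform convergence. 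One must check that the deterministic centering and scaling, $((\log n)\tau)^k/k!$ vs.\ $T_{[n^\tau]}^k/k!$, and $(\log n)^{k-1/2}$ vs.\ $T_{[n^\tau]}^{k-1/2}$, differ negligibly after division by $(\log n)^{k-1/2}$; this is where the error $T_{[n^\tau]}-\tau\log n = O(\sqrt{\log\log n}) = o(\sqrt{\log n})$ is used, so that the discrepancy in the centering term is $o((\log n)^{k-1/2})$ uniformly on compacts.

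The third, largely bookkeeping, step is to pass from convergence of each finite initial segment $(X_{[n^{(\cdot)}]}(1),\dots,X_{[n^{(\cdot)}]}(k))$ in $D^k$ to convergence of the whole sequence in the product space $D^{\N}$; since the product topology on $D^{\N}$ is metrized so that convergence is equivalent to convergence of every coordinate-projection, and these are compatible across $k$, this is immediate from the $k$-dimensional statements. The main obstacle I expect is the second step: making the random time change rigorous in the $J_1$-topology, in particular ruling out that the discretization jumps of $X_{[n^\tau]}(k)$ in $\tau$ (which occur at the random times when $[n^\tau]$ increments and a level-$k$ vertex happens to be born) survive in the limit. The resolution is that these jumps are of size $1$, hence of order $o((\log n)^{k-1/2})$ after normalization, and the time-change map is asymptotically the identity — but one must phrase this via an appropriate lemma (e.g.\ a version of Billingsley's or Whitt's results on composition maps, or a direct $\eps$-$\delta$ argument using a.s.\ convergence of $T_n/\log n$ together with Skorokhod coupling) so as not to hand-wave the interaction between the two sources of jumps.
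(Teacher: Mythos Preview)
Your strategy coincides with the paper's: embed the recursive tree into a Yule process (this is formula~\eqref{basic}), apply Theorem~\ref{main} with $\mu=\sigma^2=1$ to obtain the FCLT for $Y_k((\log n)\cdot)$, and then compose with the random time change $s\mapsto\tau_{[n^s]}/\log n\to s$ using the $J_1$-continuity of composition at continuous limits. One slip to correct: the successive birth times do \emph{not} form a Poisson process but a Yule (pure-birth) process with the $j$th interarrival distributed as $\mathrm{Exp}(j)$, so in fact $\tau_n-\log n$ converges a.s.\ (stronger than your $O(\sqrt{\log\log n})$ and more than enough for the centering swap $(\tau_{[n^s]})^k\leftrightarrow(s\log n)^k$ that you rightly flag as needing verification---a step the paper itself leaves implicit after applying the composition map).
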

\begin{Rem}
While the stochastic integral $R_1(s):=\int_{[0,\,s]}{\rm
d}B(y)$ on the right-hand side of \eqref{clt5} is interpreted as
$B(s)$, the other stochastic integrals can be defined via
integration by parts which yields
$$R_k(s):=\int_{[0,\,s]}(s-y)^{k-1}{\rm
d}B(y)=(k-1)!\int_0^{s_1}\int_0^{s_2}\ldots\int_0^{s_{k-1}}
B(y){\rm d}y{\rm d}s_{k-1}\ldots{\rm d}s_2$$ for integer $k\geq 2$
and $s\geq 0$, where $s_1=s$. Depending on whether the left- or
right-hand representation is used the latter process is known in
the literature as a Riemann-Liouville process or an integrated
Brownian motion. It can be checked (details can be found in
Section 2 of \cite{Iksanov:2013}) that $R_k(s)$ has the same
distribution as $\sqrt{s^{2k-1}/(2k-1)}B(1)$ for each $s\geq 0$
and $k\in\mn$. In particular, $\E R_k^2(s)=s^{2k-1}/(2k-1)$.
Along similar lines one can also show that
$$\E R_k(s) R_l(u) =\int_0^{u\wedge s}(s-y)^{k-1}(u-y)^{l-1}{\rm
d}y=\begin{cases}
         \sum_{j=0}^{l-1} \binom{l-1}{j}\frac{1}{k+j}
s^{k+j}(u-s)^{l-1-j}, &   \text{if } \ u\geq s\geq 0,   \\
        \sum_{j=0}^{k-1} \binom{k-1}{j}\frac{1}{l+j}
u^{l+j}(s-u)^{k-1-j}, & \text{if} \ 0\leq u<s
\end{cases}$$ for $k,l\in\mn$. Observe that the aforementioned distributional equality  shows that taking in \eqref{clt5} $(\cdot)=1$ and any fixed $k$ we
obtain \eqref{fuchs}. Moreover, taking $(\cdot)=1$ and
$k=1,2,\ldots$, we obtain the following multivariate central limit
theorem for the  low levels profile:
$$
\left(\frac{(k-1)!\big(X_{n}(k)-(\log n)^k/k!
\big)}{(\log n)^{k-1/2}}\right)_{k\in\mn}~\todistr~
(R_k(1))_{k\in\mn}
$$
weakly on $\mathbb R^\mathbb N$ endowed with the product topology, where the limit is a centered Gaussian process with covariance function
$$
\E R_k(1)R_l(1) = \frac {1}{k+l-1}, \quad k,l\in\N.
$$
\end{Rem}

\section{Our approach and an auxiliary tool}

In order to explain our approach that we use to prove Theorem
\ref{main2} we need more notation.

Let $(\xi_k)_{k\in\mn}$ be a sequence of i.i.d.\ positive random
variables with generic copy $\xi$. Denote by $S:=(S_n)_{n\in\mn}$
the ordinary random walk with jumps $\xi_n$ for $n\in\mn$, that
is, $S_n = \xi_1+\ldots+\xi_n$, $n \in \mn$. Further, we define
the renewal process $(N(t))_{t\in\R}$ by
$$N(t):=\sum_{k\geq 1}\1_{\{S_k\leq
t\}},\quad t\in \R.$$ Set $U(t):=\E N(t)$ for $t\in\mr$, so that,
with a slight abuse of terminology, $U$ is the renewal function.
Clearly, $N(t)=0$ a.s.\ and $U(t)=0$ for $t<0$.

Next, we recall the construction of a Crump-Mode-Jagers branching
process in the special case when it is generated by the random
walk $S$. At time $\tau_0=0$ there is one individual, the
ancestor. The ancestor produces offspring (the first generation)
with birth times given by a point process $\mm = \sum_{n\geq 1}
\delta_{S_n}$ on $\R_+:=[0,\infty)$. The first generation produces
the second generation. The shifts of birth times of the second
generation individuals with respect to their mothers' birth times
are distributed according to independent copies of the same point
process $\mm$. The second generation produces the third one, and
so on. All individuals act independently of each other.
Equivalently, one may consider a branching random walk. In this
case, the points of $\mm$ are interpreted as the positions of the
 first generation individuals. Each individual in the
first generation produces individuals from the second generation
whose displacements with respect to the position of their
respective mother are given by an independent copy of $\mm$, and
so on.

For $k\in\mn$, denote by $Y_k(t)$ the number of the $k$th
generation individuals with birth times $\leq t$. Plainly,
$Y_1(t)=N(t)$ for $t\geq 0$. We recall that $0!=1$. For $n\in\mn$,
denote by $\tau_n$ the birth time of the $n$th individual (in the
chronological order of birth times, excluding the ancestor).

Now we are ready to point out the basic observation for the proof
of Theorem \ref{main2}: if $\xi$ has an exponential distribution
of unit mean, then the following  distributional
equality of stochastic processes holds true:
\begin{equation}\label{basic}
(X_{[n^s]}(k))_{s\geq 0, k\in \mn}\overset{{\rm d}}{=}
(Y_k(\tau_{[n^s]}))_{s\geq 0, k\in \mn}.
\end{equation}
In the following, we shall simply identify  these processes.
Formula \eqref{basic} follows from the fact observed by B.\
Pittel, see p.~339 in \cite{Pittel:1994}, that the tree formed by
the individuals in combination with their family relations at time
$\tau_n$ is a version of a random recursive tree with $n+1$
vertices. To give a short explanation, imagine that a random
recursive tree is generated in continuous time as follows. Start
at time $0$ with one  vertex, the root. At any time, any vertex in
the tree generates with intensity $1$ a single offspring, and all
vertices act independently. Then, the birth times of the vertices
at
 the first level form a Poisson point process with
intensity $1$. More generally, if some vertex was born at time
$t$, then the birth times of its offspring minus $t$ form an
independent copy of the Poisson point process. This system can be
identified with the Crump-Mode-Jagers process
generated by an ordinary random walk with jumps having the
exponential distribution of unit mean. 
If $\tau_n$ is the birth time of the $n$th vertex, then the
genealogical tree of the vertices with birth times in the interval
$[0,\tau_n]$ is a random recursive tree. The embedding into a
continuous time process just described was used in~\cite{chauvin_etal:2005, kab_mar_sulz:2017,
Pittel:1994}.

Theorem \ref{main} given next is our main technical tool for
proving Theorem \ref{main2}. We stress that here, the distribution
of $\xi$ is not assumed exponential, so that Theorem \ref{main} is
far more general than what is needed to treat random recursive
trees.
\begin{Theorem}\label{main}
Suppose that $\sigma^2:={\rm Var}\,\xi\in (0, \infty)$. Then
\begin{equation}\label{clt}
\left( \frac{(k-1)!\big(Y_k(t\cdot)-(t\cdot)^k/(k!
\mu^k)\big)}{\sqrt{\sigma^2\mu^{-2k-1}t^{2k-1}}}\right)_{k\in\mn}~\toweakt~
(R_k(\cdot))_{k\in\mn}
\end{equation}
in the product $J_1$-topology on $D^{\N}$, where
$\mu:=\E\xi<\infty$.
\end{Theorem}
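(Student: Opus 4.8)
The plan is to peel off one generation in the branching recursion, isolate a main term that is an explicit iterated integral of the renewal counting process $N=Y_1$, deduce its limit from the functional central limit theorem for renewal processes via the continuous mapping theorem, and show that the leftover term is uniformly negligible.

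I begin with the branching decomposition $Y_k(t)=\sum_{n\ge1}Y_{k-1}^{(n)}(t-S_n)$, where the processes $Y_{k-1}^{(n)}$ are i.i.d.\ copies of $Y_{k-1}$ independent of $\mm$, and $Y_j(r)=0$ for $r<0$. Writing $\hat Y_k(t):=Y_k(t)-t^k/(k!\,\mu^k)$ and inserting $Y_{k-1}^{(n)}(r)=r^{k-1}/((k-1)!\,\mu^{k-1})+\hat Y_{k-1}^{(n)}(r)$ for $r\ge0$ leads to the splitting $\hat Y_k=\hat A_k+D_k$ with
$$\hat A_k(t)=\frac{1}{(k-1)!\,\mu^{k-1}}\int_{[0,t]}(t-y)^{k-1}\,{\rm d}N(y)-\frac{t^k}{k!\,\mu^k},\qquad D_k(t)=\sum_{n\ge1}\hat Y_{k-1}^{(n)}(t-S_n),$$
the $\hat Y_{k-1}^{(n)}$ being i.i.d.\ copies of $\hat Y_{k-1}$ independent of $\mm$. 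A Fubini computation gives the identity $\hat A_k(t)=\mu^{-1}\int_0^t\hat A_{k-1}(y)\,{\rm d}y$ for $k\ge2$, with $\hat A_1(t)=N(t)-t/\mu$.

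For the main term, set $\hat A_k^{(t)}(u):=(k-1)!\,\hat A_k(tu)/\sqrt{\sigma^2\mu^{-2k-1}t^{2k-1}}$. Rescaling the identity above and cancelling the normalizing constants yields $\hat A_k^{(t)}(u)=(k-1)\int_0^u\hat A_{k-1}^{(t)}(v)\,{\rm d}v$ for $k\ge2$ and $\hat A_1^{(t)}(u)=(N(tu)-tu/\mu)/\sqrt{\sigma^2\mu^{-3}t}$, so $\hat A_k^{(t)}=\mathcal I_k(\hat A_1^{(t)})$ where $\mathcal I_1 f=f$ and $\mathcal I_k f(u)=(k-1)\int_0^u\mathcal I_{k-1}f(v)\,{\rm d}v$. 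By the classical functional central limit theorem for renewal processes, $\hat A_1^{(t)}\toweakt B$ in the $J_1$-topology on $D$; since $f\mapsto(\mathcal I_k f)_{k\in\mn}$ is continuous from $(D,J_1)$ into $D^{\N}$ with the product $J_1$-topology (integration turns $J_1$-convergence into local uniform convergence), the continuous mapping theorem gives $(\hat A_k^{(t)})_{k\in\mn}\toweakt(\mathcal I_k B)_{k\in\mn}$. The iterated-integral representation of the Riemann--Liouville process recalled in the Remark identifies $\mathcal I_k B$ with $R_k$, so the vector of main terms converges to the right-hand side of \eqref{clt}.

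It remains to show $\sup_{0\le u\le T}|D_k(tu)|=o_{\Prob}(t^{k-1/2})$ for every $T>0$; as the limit in \eqref{clt} is continuous, this makes the rescaled $D_k$ negligible in the product $J_1$-topology. I would run an induction on $k$ carrying, besides \eqref{clt}, the second-moment bounds $\Var Y_k(s)=O(s^{2k-1})$ and $\E[\sup_{0\le r\le s}(Y_k(r)-\E Y_k(r))^2]=O(s^{2k-1})$ as $s\to\infty$; the base case reduces to the classical estimates $\Var N(s)=O(s)$ and $\E[\sup_{0\le r\le s}(N(r)-r/\mu)^2]=O(s)$, while $\hat A_k$ inherits the same two bounds from the integral identity and the Cauchy--Schwarz inequality. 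Given the bounds at level $k-1$, the conditional independence of the summands of $D_k$ given $\mm$, together with the renewal estimates $U(s)=O(s)$ and $U(s)=s/\mu+o(\sqrt s)$ (the latter from the renewal CLT and uniform integrability), yields $\E D_k(t)=o(t^{k-1/2})$ and, via a maximal inequality for sums of independent processes applied conditionally on $\mm$ (whose diagonal term has expectation $\int_{[0,tT]}\E[\sup_{0\le r\le tT-y}(Y_{k-1}(r)-\E Y_{k-1}(r))^2]\,{\rm d}U(y)=O(t^{2k-2})$), the uniform bound $\E[\sup_{0\le s\le tT}(D_k(s)-\E D_k(s))^2]=O(t^{2k-2})=o(t^{2k-1})$; this also closes the induction for the moment bounds of $\hat Y_k=\hat A_k+D_k$. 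Since a basic neighbourhood in the product topology on $D^{\N}$ involves only finitely many coordinates, this finishes the proof. The main obstacle is precisely this last step — organizing the induction so that the required variance and supremum bounds for the remainders are available at each generation, and choosing a maximal inequality adapted to the conditionally independent, randomly time-shifted summands $\hat Y_{k-1}^{(n)}(\cdot-S_n)$; by contrast, the identification of the limit is a soft consequence of the renewal FCLT and the continuous mapping theorem.
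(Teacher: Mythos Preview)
Your decomposition differs from the paper's. You center each branching summand $Y_{k-1}^{(n)}(t-S_n)$ by the polynomial $(t-S_n)^{k-1}/((k-1)!\mu^{k-1})$, so that the main term $\hat A_k$ is the $(k-1)$-fold iterated integral of $N(t)-t/\mu$; the paper instead centers by the exact mean $U_{k-1}(t-S_n)$ and obtains the three-term split $Y_{k,1}+Y_{k,2}+Y_{k,3}$. Your treatment of the main term is genuinely shorter: once $\hat A_k^{(t)}=\mathcal I_k(\hat A_1^{(t)})$, joint functional convergence of all coordinates follows from the renewal FCLT by one application of the continuous mapping theorem, whereas the paper has to prove finite-dimensional convergence of its main random piece $Y_{k,2}$ via Skorokhod's representation theorem and an explicit covariance computation, and then quote coordinate-wise tightness from~\cite{Iksanov:2013}.

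The gap is in the remainder. The ``maximal inequality for sums of independent processes'' you invoke amounts to a type-$2$ inequality $\E\bigl\|\sum_n W_n\bigr\|_\infty^2\le C\sum_n\E\|W_n\|_\infty^2$ for independent mean-zero $D[0,tT]$-valued random elements; the space $D[0,tT]$ with the supremum norm is not of type $2$, so no such inequality holds in general, and the bound $\E\bigl[\sup_{s\le tT}(D_k(s)-\E D_k(s))^2\bigr]=O(t^{2k-2})$ is not justified as written. The paper never attempts a supremum moment bound: from the \emph{pointwise} estimate $\E Y_{k,1}(t)^2=O(t^{2k-2})$ it applies Borel--Cantelli along the subsequence $t=m^\delta$, $\delta\in(1,2)$, to get $m^{-\delta(k-1/2)}Y_{k,1}(m^\delta)\to0$ a.s., and then interpolates between consecutive $m^\delta$ using that both $t\mapsto Y_{k-1}^{(n)}(t-S_n)$ and $t\mapsto U_{k-1}(t-S_n)$ are nondecreasing. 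The same pointwise variance bound $O(t^{2k-2})$ holds for your $D_k$, and since the polynomial you subtract is also monotone, this subsequence-plus-monotonicity argument transfers to $D_k$ without change; once you replace the appeal to a maximal inequality by this device (and drop the supremum second-moment hypothesis from the induction), your proof goes through and, on the limit-identification side, is cleaner than the paper's.
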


For $i\in\mn$, consider the $1$st generation individual born at
time $S_i$ and denote by $Y_j^{(i)}(t)$ for $j\in\mn$ the number
of her successors in the $(j+1)$st generation with
birth times $\leq t+S_i$. By the branching property
$(Y_j^{(1)}(t))_{t\geq 0}$, $(Y_j^{(2)}(t))_{t\geq 0},\ldots$ are
independent copies of $(Y_j(t))_{t\geq 0}$ which are independent
of $S$. With this at hand we are ready to write the basic
representation
$$Y_k(t)=\sum_{i\geq 1}Y^{(i)}_{k-1}(t-S_i),\quad t\geq 0, k\geq 2.$$ Note
that, for $k\geq 2$, $(Y_k(t))_{t\geq 0}$ is a particular instance
of a random process with immigration at the epochs of a renewal
process which is a renewal shot noise process with random and
independent response functions (the term was introduced in
\cite{Iksanov+Marynych+Meiners:2017}; see also \cite{Iksanov:2017}
for a review).

For $t\geq 0$ and $k\in\mn$, set $U_k(t):=\E Y_k(t)$ and observe
that, $U_1(t)=U(t)$ and
$$U_k(t)=\int_{[0,\,t]}U_{k-1}(t-y){\rm d}U(y)=\int_{[0,\,t]}U(t-y){\rm d}U_{k-1}(y).$$ Our strategy of the proof of Theorem \ref{main}
is the following. Using a decomposition
\begin{eqnarray*}
Y_k(t)-\frac{t^k}{k!\mu^k}&=&\sum_{j\geq
1}\big(Y^{(j)}_{k-1}(t-S_j)-U_{k-1}(t-S_j)\1_{\{S_j\leq
t\}}\big)\\&+ &\bigg(\sum_{j\geq 1}U_{k-1}(t-S_j)\1_{\{S_j\leq
t\}}-\mu^{-1}\int_0^t U_{k-1}(y){\rm d}y\bigg)\\&+&
\bigg(\mu^{-1}\int_0^t U_{k-1}(y){\rm
d}y-\frac{t^k}{k!\mu^k}\bigg)=:Y_{k,1}(t)+Y_{k,2}(t)+Y_{k,3}(t)
\end{eqnarray*}
for $k\geq 2$, we shall prove three statements: for all $T>0$,
\begin{equation}\label{probab}
\frac{\sup_{0\leq s\leq T}\,|Y_{k,1}(st)|}{t^{k-1/2}} \tp 0,\quad
t\to\infty;
\end{equation}
\begin{equation}\label{clt4}
\lim_{t\to\infty}t^{-(k-1/2)}\sup_{0\leq s\leq
T}\,|Y_{k,3}(st)|=0,
\end{equation}
and
\begin{equation}\label{clt2}
\left(
\frac{Y_1(t\cdot)-\mu^{-1}(t\cdot)}{\sqrt{\sigma^2\mu^{-3}t}},\frac{(k-1)!Y_{k,2}(t\cdot)}{\sqrt{\sigma^2\mu^{-2k-1}t^{2k-1}}}\right)_{k\geq
2} ~\toweakt~ (R_k(\cdot))_{k\in\mn}
\end{equation}
in the product $J_1$-topology on $D^{\N}$. Plainly,
\eqref{probab}, \eqref{clt4} and \eqref{clt2} entail \eqref{clt}.
Weak convergence of the coordinates in \eqref{clt2} is known: see
Theorem 3.1 on p.~162 in \cite{Gut:2009} for the first coordinate
and Theorem 1.1 in \cite{Iksanov:2013} for the others.

\section{Proof of Theorem \ref{main2}}

Applying Theorem \ref{main}  to exponentially
distributed $\xi$ of unit mean (so that $\mu=\sigma^2=1$) we
obtain
\begin{equation}\label{clt3}
\left( \frac{(k-1)!\big(Y_k((\log n)\cdot)-((\log n) \cdot)^k/k!
\big)}{(\log n)^{k-1/2}}\right)_{k\in\mn}~\toweak~
\big(R_k(\cdot)\big)_{k\in\mn}
\end{equation}
in the product $J_1$-topology on $D^{\mn}$.

It is a classical fact that $\tau_n$ is the sum of $n$ independent
exponentially distributed random variables of means $1,2,\ldots,
n$, whence $\lim_{n\to\infty}(\tau_n/ \log n)=1$ a.s. Arguing as
in the proof of Theorem 3 in \cite{Glynn+Whitt:1988} we conclude
that, for each $T>0$, $\lim_{n\to\infty}\sup_{0\leq s\leq
T}|\tau_{[n^s]}/\log n-\psi(s)|=0$ a.s., where $\psi(s)=s$ for
$s\geq 0$. This in combination with \eqref{clt3} gives
\begin{equation*}
\left(\left( \frac{(k-1)!\big(Y_k(\log n\cdot)-((\log n)
\cdot)^k/k! \big)}{(\log n)^{k-1/2}}\right)_{k\in\mn},
\frac{\tau_{[n^{(\cdot)}]}}{\log n}\right)~\toweak~
\big( \big(R_k(\cdot) \big)_{k\in\mn},
\psi(\cdot)\big)
\end{equation*}
in the product $J_1$-topology on $D^{\mn}\times D$.

It is well-known (see, for instance, Lemma 2.3 on p.~159 in
\cite{Gut:2009}) that, for fixed $j\in\mn$, the composition
mapping $((x_1,\ldots, x_j), \varphi)\mapsto (x_1\circ
\varphi,\ldots, x_j\circ \varphi)$ is continuous at vectors
$(x_1,\ldots, x_j): \mr_+^j\to \mr^j$ with continuous coordinates
and nondecreasing continuous $\varphi: \mr_+\to \mr_+$, where
$\mr_+:=[0,\infty)$. Since $R_k$ is a.s.\ continuous and $\psi$ is
nonnegative, nondecreasing and continuous, 
we can invoke the continuous mapping theorem to infer \eqref{clt5} with $Y_k(\tau_{[n^{(\cdot)}]})$ replacing $X_{[n^{(\cdot)}]}(k)$. In view of \eqref{basic} this completes the proof of Theorem \ref{main2}. 

\section{Proof of Theorem \ref{main}}

It is well known that
\begin{equation}\label{lord} -1\leq U(t)-t/\mu\leq c_0,\quad t\geq 0
\end{equation}
for appropriate constant $c_0>0$ whenever $\E\xi^2<\infty$.
While the left-hand inequality follows from Wald's identity $t\leq
\E S_{N(t)+1}=\mu (U(t)+1)$, the right-hand inequality is Lorden's
inequality (see \cite{Carlson+Nerman:1986} for a short
probabilistic proof in the situation where $\xi$ has a nonlattice
distribution). If the distribution of $\xi$ is nonlattice, one can
take $c_0={\rm Var}\,\xi/\E\xi^2$, whereas if the distribution of
$\xi$ is $\delta$-lattice, \eqref{lord} holds with
$c_0=2\delta/\mu+{\rm Var}\,\xi/\E\xi^2$. We shall need the
following consequence of \eqref{lord}:
\begin{equation}\label{lord2}
|U(t)-t/\mu|\leq c,\quad t\geq 0
\end{equation}
where $c=\max(c_0,1)$.

\begin{Lemma}\label{aux1}
Under the assumption $\E\xi^2<\infty$
\begin{equation}\label{ineq2}
\bigg|U_k(t)-\frac{t^k}{k!\mu^k}\bigg|\leq
\sum_{i=0}^{k-1}\binom{k}{i}\frac{t^i c^{k-i}}{i!\mu^i},\quad
k\in\mn,~t\geq 0.
\end{equation}
\end{Lemma}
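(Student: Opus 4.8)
The plan is to argue by induction on $k\in\mn$. The base case $k=1$ is nothing but \eqref{lord2}, since the right-hand side of \eqref{ineq2} reduces to $c$ when $k=1$. For the inductive step I would start from the convolution identity $U_k(t)=\int_{[0,\,t]}U(t-y)\,{\rm d}U_{k-1}(y)$ recorded just before the lemma, and split the kernel as $U(t-y)=(t-y)/\mu+\Delta(t-y)$, where $\Delta(u):=U(u)-u/\mu$ satisfies $|\Delta(u)|\leq c$ for all $u\geq 0$ by \eqref{lord2}.

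The first step is to isolate the main term. Using Lebesgue--Stieltjes integration by parts together with $U_{k-1}(0)=0$ (which holds because $\xi>0$ a.s., so that no $(k-1)$st generation individual is born by time $0$), one obtains $\int_{[0,\,t]}\frac{t-y}{\mu}\,{\rm d}U_{k-1}(y)=\frac1\mu\int_0^t U_{k-1}(s)\,{\rm d}s$, and then $\frac1\mu\int_0^t\frac{s^{k-1}}{(k-1)!\,\mu^{k-1}}\,{\rm d}s=\frac{t^k}{k!\,\mu^k}$. Writing $V_j(t):=U_j(t)-\frac{t^j}{j!\,\mu^j}$, this yields, for $k\geq 2$, the recursion
$$
V_k(t)=\frac1\mu\int_0^t V_{k-1}(s)\,{\rm d}s+\int_{[0,\,t]}\Delta(t-y)\,{\rm d}U_{k-1}(y).
$$
The second step is to estimate the two contributions. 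Since ${\rm d}U_{k-1}$ is a nonnegative measure of total mass $U_{k-1}(t)$ on $[0,t]$, the last integral is bounded in absolute value by $c\,U_{k-1}(t)$; by the induction hypothesis $|V_{k-1}(s)|$ is bounded by the polynomial appearing on the right of \eqref{ineq2} with $k-1$ in place of $k$, and $U_{k-1}(t)\leq \frac{t^{k-1}}{(k-1)!\,\mu^{k-1}}+$ that same polynomial.

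The last step is purely computational: one substitutes these bounds into the recursion, integrates the polynomial bound for $|V_{k-1}|$ term by term, reindexes, and combines the two resulting families of binomial coefficients via Pascal's rule $\binom{k-1}{i-1}+\binom{k-1}{i}=\binom{k}{i}$ (with the convention $\binom{k-1}{-1}=0$), recovering exactly $\sum_{i=0}^{k-1}\binom{k}{i}\frac{t^i c^{k-i}}{i!\,\mu^i}$. I do not expect a genuine obstacle here: the only points requiring some care are the justification of the Stieltjes integration by parts (routine, since $U_{k-1}$ is nondecreasing with $U_{k-1}(0)=0$ and the other factor is $C^1$) and the bookkeeping in the binomial identity, which is elementary once the recursion for $V_k$ is in place.
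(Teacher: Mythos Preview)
Your argument is correct and the final binomial bookkeeping works out exactly as you describe. The approach differs from the paper's in one organizational respect: the paper uses the other form of the convolution, $U_{j+1}(t)=\int_{[0,t]}U_j(t-z)\,{\rm d}U(z)$, and therefore needs as a preliminary step the auxiliary estimate
\[
\bigg|\int_{[0,t]}(t-z)^m\,{\rm d}U(z)-\frac{t^{m+1}}{(m+1)\mu}\bigg|\le ct^m,\qquad m\in\N_0,
\]
which it proves by its own induction. You bypass this by integrating against ${\rm d}U_{k-1}$ instead and using $|U(u)-u/\mu|\le c$ directly, converting the main contribution into $\mu^{-1}\int_0^t U_{k-1}(s)\,{\rm d}s$ via Fubini/integration by parts. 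This is slightly more economical for the lemma itself; on the other hand, the paper's auxiliary inequality \eqref{ineq1} is reused later (in the variance estimates of Lemma~\ref{aux2} and in the proof of \eqref{clt4}), so establishing it separately is not wasted effort in context.
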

\begin{proof}
By using the mathematical induction we first show that
\begin{equation}\label{ineq1}
\bigg|\int_{[0,\,t]}(t-z)^m{\rm
d}U(z)-\frac{t^{m+1}}{(m+1)\mu}\bigg|\leq ct^m,\quad m\in\N_0.
\end{equation} When $m=0$, \eqref{ineq1} is a consequence of
\eqref{lord2}. Assuming that \eqref{ineq1} holds for $m=j-1$ we
obtain $$\bigg|\int_{[0,\,t]}(t-z)^j{\rm
d}U(z)-\frac{t^{j+1}}{(j+1)\mu}\bigg|=\bigg|j\int_0^t\bigg(\int_{[0,\,s]}(s-z)^{j-1}{\rm
d}U(z)-\frac{s^j}{j\mu}\bigg){\rm d}s\bigg|\leq j\int_0^t
cs^{j-1}{\rm d}s=ct^j$$ which completes the proof of
\eqref{ineq1}.

To prove \eqref{ineq2} we once again use the mathematical
induction. When $k=1$, \eqref{ineq2} coincides with \eqref{lord2}.
Assuming that \eqref{ineq2} holds for $k\leq j$ and appealing to
\eqref{ineq1} we infer
\begin{eqnarray*}
&&\bigg|U_{j+1}(t)-\frac{t^{j+1}}{(j+1)!\mu^{j+1}}\bigg|\\&\leq
&\int_{[0,\,t]}\bigg|U_j(t-z)-\frac{(t-z)^j}{j!\mu^j}\bigg|{\rm
d}U(z)+\frac{1}{j!\mu^j}\bigg|\int_{[0,\,t]}(t-z)^j{\rm
d}U(z)-\frac{t^{j+1}}{(j+1)\mu}\bigg|\\&\leq&
\int_{[0,\,t]}\sum_{i=0}^{j-1}\binom{j}{i}\frac{c^{j-i}}{i!\mu^i}(t-z)^i{\rm
d}U(z)+\frac{ct^j}{j!\mu^j}\\&\leq&
\sum_{i=0}^{j-1}\binom{j}{i}\frac{c^{j+1-i}t^i}{i!\mu^i}+\sum_{i=0}^{j-1}\binom{j}{i}\frac{c^{j-i}t^{i+1}}{(i+1)!\mu^{i+1}}+\frac{ct^j}{j!\mu^j}\\&\leq&
c^{j+1}+\sum_{i=1}^{j-1}\bigg(\binom{j}{i}+\binom{j}{i-1}\bigg)\frac{c^{j+1-i}t^i}{i!\mu^i}+\frac{(j+1)ct^j}{j!\mu^j}=
\sum_{i=0}^j\binom{j+1}{i}\frac{c^{j+1-i}t^i}{i!\mu^i}
\end{eqnarray*}
\end{proof}

\begin{Lemma}\label{aux2}
Under the assumption $\E\xi^2<\infty$, for $k\in\mn$,
\begin{equation}\label{ineq3}
D_k(t):={\rm Var}\,Y_k(t)=O(t^{2k-1}),\quad t\to\infty
\end{equation}
and,  for $k\geq 2$,
\begin{equation}\label{ineq4}
\E [(Y_{k,1}(t))^2] = O(t^{2k-2}),\quad t\to\infty.
\end{equation}
\end{Lemma}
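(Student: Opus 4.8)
The plan is to establish \eqref{ineq3} by induction on $k$, with \eqref{ineq4} dropping out of the inductive step; the engine is a conditional second--moment computation with respect to the $\sigma$-algebra $\mathcal S:=\sigma(S_1,S_2,\ldots)$ generated by the first--generation birth times. For the base case $k=1$ we have $Y_1=N$, and $\Var N(t)=O(t)$ as $t\to\infty$ is the classical second--order renewal estimate under $\E\xi^2<\infty$ (in fact $\Var N(t)\sim\sigma^2\mu^{-3}t$; see e.g.\ \cite{Gut:2009}). Since $N$ is nondecreasing, $\Var N(s)\le\E[N(s)^2]\le\E[N(t)^2]<\infty$ for $s\le t$, so this upgrades to $\Var N(s)\le C(1+s)$ for all $s\ge0$ and some $C>0$; note that \eqref{ineq4} is not asserted for $k=1$.

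Fix now $k\ge2$ and assume \eqref{ineq3} holds with $k-1$ in place of $k$. Conditionally on $\mathcal S$, the branching property writes $Y_k(t)=\sum_{i\ge1}Y^{(i)}_{k-1}(t-S_i)$ as a sum of independent random variables whose $i$th summand has conditional mean $U_{k-1}(t-S_i)\1_{\{S_i\le t\}}$ and conditional variance $D_{k-1}(t-S_i)\1_{\{S_i\le t\}}$. Putting $Q_k(t):=\E[Y_k(t)\mid\mathcal S]=\sum_{i\ge1}U_{k-1}(t-S_i)\1_{\{S_i\le t\}}$ and using $\E\sum_{i\ge1}g(S_i)=\int g\,{\rm d}U$ for $g\ge0$, the total variance formula gives
\begin{equation*}
D_k(t)=\int_{[0,\,t]}D_{k-1}(t-y)\,{\rm d}U(y)+\Var Q_k(t),
\end{equation*}
and the same conditioning (the summands of $Y_{k,1}(t)$ being conditionally centered and independent) gives $\E[(Y_{k,1}(t))^2]=\int_{[0,\,t]}D_{k-1}(t-y)\,{\rm d}U(y)$. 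The induction hypothesis together with local boundedness yields $D_{k-1}(s)\le C(1+s)^{2k-3}$ for all $s\ge0$, and applying \eqref{ineq1} termwise to the binomial expansion of $(1+(t-y))^{2k-3}$ shows $\int_{[0,\,t]}D_{k-1}(t-y)\,{\rm d}U(y)=O(t^{2k-2})$. This proves \eqref{ineq4} and reduces \eqref{ineq3} to the bound $\Var Q_k(t)=O(t^{2k-1})$.

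This last bound is the genuinely delicate point. The crude estimate $\Var Q_k(t)\le\E[(Q_k(t))^2]$ is far too weak, since $\E Q_k(t)=U_k(t)\asymp t^k$ forces $\E[(Q_k(t))^2]\asymp t^{2k}$, so cancellation must be exploited. The device is to represent $Q_k(t)$ as a Lebesgue--Stieltjes integral of $N$: since $U_{k-1}$ is nondecreasing and right--continuous with $U_{k-1}(0)=0$ (as $\xi>0$ a.s.), its Lebesgue--Stieltjes measure $\nu$ satisfies $U_{k-1}(t-S_i)\1_{\{S_i\le t\}}=\int_{(0,\infty)}\1_{\{S_i\le t-v\}}\,\nu({\rm d}v)$, whence, by Tonelli, $Q_k(t)=\int_{(0,\,t]}N(t-v)\,\nu({\rm d}v)$ and
\begin{equation*}
\Var Q_k(t)=\int_{(0,\,t]}\!\int_{(0,\,t]}\Cov\big(N(t-v),N(t-w)\big)\,\nu({\rm d}v)\,\nu({\rm d}w).
\end{equation*}
By Cauchy--Schwarz and the base case, $|\Cov(N(t-v),N(t-w))|\le\sqrt{\Var N(t-v)\,\Var N(t-w)}\le C(1+t)$ for $v,w\in(0,t]$, and since $\nu((0,t])=U_{k-1}(t)=O(t^{k-1})$ by Lemma \ref{aux1}, we get $\Var Q_k(t)\le C(1+t)\,U_{k-1}(t)^2=O(t^{2k-1})$. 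Combined with the previous display this gives $D_k(t)=O(t^{2k-2})+O(t^{2k-1})=O(t^{2k-1})$, closing the induction; \eqref{ineq4} was obtained along the way. The whole argument rests on Lemma \ref{aux1} and \eqref{ineq1} for the deterministic bookkeeping and on the single first--order input $\Var N(s)=O(s)$ for the stochastic part, which is why the Stieltjes reduction in the last step is the crux.
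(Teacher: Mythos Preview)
Your proof is correct and shares the paper's overall scaffolding: both use the law of total variance decomposition
\[
D_k(t)=\E\big[(Y_{k,1}(t))^2\big]+\Var Q_k(t),\qquad Q_k(t):=\E[Y_k(t)\mid\mathcal S]=\sum_{i\ge1}U_{k-1}(t-S_i)\1_{\{S_i\le t\}},
\]
together with induction on $k$, and both handle the first summand identically via $\E[(Y_{k,1}(t))^2]=\int_{[0,t]}D_{k-1}(t-y)\,{\rm d}U(y)=O(t^{2k-2})$. The genuine difference is in how the bound $\Var Q_k(t)=O(t^{2k-1})$ is obtained. The paper (calling this quantity $\E[(Y^\ast_{k,2}(t))^2]$) computes $\E[Q_k(t)^2]$ explicitly, deriving the formula
\[
\E[Q_k(t)^2]=2\int_{[0,t]}U_{k-1}(t-y)U_k(t-y)\,{\rm d}U(y)+\int_{[0,t]}U_{k-1}^2(t-y)\,{\rm d}U(y),
\]
and then subtracts $U_k^2(t)$; Lemma~\ref{aux1} is invoked to see that the leading $t^{2k}$ terms cancel, leaving $O(t^{2k-1})$. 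Your Stieltjes representation $Q_k(t)=\int_{(0,t]}N(t-v)\,{\rm d}U_{k-1}(v)$ followed by Cauchy--Schwarz on $\Cov(N(t-v),N(t-w))$ bypasses this cancellation entirely: the variance bound $\Var N(s)\le C(1+s)$ already sits one order below $(\E N(s))^2$, so the double integral is immediately $O(t)\cdot U_{k-1}(t)^2=O(t^{2k-1})$. This is shorter and conceptually cleaner; what the paper's explicit computation buys is a route to sharper information (one could in principle extract the leading constant of $\Var Q_k(t)$ from it), though that is not exploited here.
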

\begin{proof}
Using a decomposition
\begin{eqnarray*}
Y_k(t)-U_k(t)&=&\sum_{j\geq
1}\big(Y^{(j)}_{k-1}(t-S_j)-U_{k-1}(t-S_j)\big)\1_{\{S_j\leq
t\}}\\&+& \bigg(\sum_{j\geq 1}U_{k-1}(t-S_j)\1_{\{S_j\leq
t\}}-U_k(t)\bigg)=:Y_{k,1}(t)+Y^\ast_{k,2}(t)
\end{eqnarray*}
we infer
\begin{equation}\label{aux5}
D_k(t)=\E [(Y_{k,1}(t))^2]+\E
[(Y^\ast_{k,2}(t))^2]. 
\end{equation}

We start by proving the asymptotic relation
\begin{eqnarray}\label{aux3}
\E [(Y^\ast_{k,2}(t))^2]&=&{\rm Var}\,\bigg(\sum_{i\geq
1}U_{k-1}(t-S_i)\1_{\{S_i\leq
t\}}\bigg)\notag\\&=&\E\bigg(\sum_{i\geq
1}U_{k-1}(t-S_i)\1_{\{S_i\leq
t\}}\bigg)^2 - U_k^2(t) = O(t^{2k-1}),\quad t\to\infty
\end{eqnarray}
for $k\geq 2$. To this end, we need the following formula
\begin{equation}\label{mom}
\E\bigg(\sum_{i\geq 1}U_{k-1}(t-S_i)\1_{\{S_i\leq
t\}}\bigg)^2 = 2\int_{[0,\,t]} U_{k-1}(t-y)U_k(t-y){\rm
d}U(y)+\int_{[0,\,t]}U_{k-1}^2(t-y){\rm d}U(y).
\end{equation}

\noindent {\sc Proof of \eqref{mom}}. Write
$$\E \bigg(\sum_{i\geq 1}U_{k-1}(t-S_i)\1_{\{S_i\leq
t\}}\bigg)^2=2\E \sum_{1\leq
i<j}U_{k-1}(t-S_i)U_{k-1}(t-S_j)\1_{\{S_j\leq t\}}+\E\sum_{i\geq
1}U_{k-1}^2(t-S_i)\1_{\{S_i\leq t\}}.$$ It is clear that the
second expectation is equal to the second summand on the
right-hand side of \eqref{mom}. Thus, it remains to show that the
first expectation is equal to the first summand on the right-hand
side of \eqref{mom}: 
\begin{eqnarray*}
&&\E \sum_{1\leq i<j}U_{k-1}(t-S_i)U_{k-1}(t-S_j)\1_{\{S_j\leq
t\}}\\&=& \E \sum_{i\geq 1}
U_{k-1}(t-S_i)\big(U_{k-1}(t-S_{i+1})\1_{\{S_{i+1}\leq
t\}}+U_{k-1}(t-S_{i+2})\1_{\{S_{i+2}\leq t\}}+\ldots\big)\\&=&\E
\sum_{i\geq 1} U_{k-1}(t-S_i)\1_{\{S_i\leq
t\}}\E\big(U_{k-1}(t-S_i-\xi_{i+1})\1_{\{\xi_{i+1}\leq
t-S_i\}}\\&+&U_{k-1}(t-S_i-\xi_{i+1}-\xi_{i+2})\1_{\{\xi_{i+1}+\xi_{i+2}\leq
t-S_i\}}+\ldots|S_i\big)\\&=&\E\sum_{i\geq 1}
U_{k-1}(t-S_i)\int_{[0,\,t-S_i]}U_{k-1}(t-S_i-y){\rm
d}U(y)\1_{\{S_i\leq t\}}=\E\sum_{i\geq 1}
U_{k-1}(t-S_i)U_k(t-S_i)\1_{\{S_i\leq
t\}}\\&=&\int_{[0,\,t]}U_{k-1}(t-y)U_k(t-y){\rm d}U(y).
\end{eqnarray*}

Before we proceed let us note that \eqref{ineq1} implies that, for
integer $m\leq 2k-3$,
$$\int_{[0,\,t]}(t-y)^m{\rm d}U(y)=o(t^{2k-1}),\quad t\to\infty,$$
that $$\int_{[0,\,t]}(t-y)^{2k-2} {\rm
d}U(y)=O(t^{2k-1}),\quad t\to\infty$$ and that
$$\int_{[0,\,t]}(t-y)^{2k-1} {\rm
d}U(y)\leq \frac{t^{2k}}{2k \mu}+c t^{2k-1},\quad t\geq 0.$$ Using these
relations in combination with \eqref{ineq2} yields
$$\E
\Big(\sum_{i\geq 1}U_{k-1}(t-S_i)\1_{\{S_i\leq t\}}\Big)^2\leq
\frac{2}{(k-1)!k!\mu^{2k-1}}\int_{[0,\,t]} (t-y)^{2k-1}{\rm
d}U(y)+O(t^{2k-1})\leq \frac{t^{2k}}{(k!)^2\mu^{2k}}+O(t^{2k-1})$$
 as $t\to\infty$. Further,
$$U_k^2(t)=\frac{t^{2k}}{(k!)^2\mu^{2k}}+\frac{2t^k}{k!\mu^k}\bigg(U_k(t)-\frac{t^k}{k!\mu^k}\bigg)+\bigg(U_k(t)-\frac{t^k}{k!\mu^k}\bigg)^2=
\frac{t^{2k}}{(k!)^2\mu^{2k}}+O(t^{2k-1}),\quad t\to\infty$$
having utilized \eqref{ineq2}. The last two asymptotic relations
entail $$\E [(Y^\ast_{k,2}(t))^2] =\E
\Big(\sum_{i\geq 1}U_{k-1}(t-S_i)\1_{\{S_i\leq t\}}\Big)^2-
U_k^2(t) = O(t^{2k-1}),\quad t\to\infty.$$ The proof of
\eqref{aux3} is complete.

To prove \eqref{ineq3} we shall use the mathematical induction. If
$k=1$, \eqref{ineq3} holds true by Lemma \ref{renewal}. Assume
that \eqref{ineq3} holds for $k=m-1\geq 2$. Then given $\delta>0$
there exist $t_0>0$ and $c_m>0$ such that $D_{m-1}(t)\leq
c_mt^{2m-3}$ whenever $t\geq t_0$. Consequently,
\begin{eqnarray}\label{aux4}
\E [(Y_{m,1}(t))^2]&=&\E\sum_{i\geq
1}D_{m-1}(t-S_i)\1_{\{S_i\leq
t\}}=\int_{[0,\,t-t_0]}D_{m-1}(t-x){\rm
d}U(x)\notag\\&+&\int_{(t-t_0,\,t]}D_{m-1}(t-x){\rm d}U(x)\leq
c_m\int_{[0,\,t-t_0]}(t-x)^{2m-3}{\rm d}U(x)\notag\\&+&\sup_{0\leq
y\leq t_0}\,D_{m-1}(y)(U(t)-U(t-t_0))\notag\\&\leq&
c_mt^{2m-3}U(t)+\sup_{0\leq y\leq
t_0}\,D_{m-1}(y)(U(t_0)+1)=O(t^{2m-2}),\quad t\to\infty
\end{eqnarray}
having utilized subadditivity of $U(t)+1$ and the elementary
renewal theorem which states that $U(t)\sim t/\mu$ as
$t\to\infty$. Using \eqref{aux5} and \eqref{aux3} we conclude that
\eqref{ineq3} holds for $k=m$. Relation \eqref{ineq4} is now an
immediate consequence of \eqref{aux4}.
\end{proof}

Now we are ready to prove Theorem \ref{main}.

\begin{proof}[Proof of Theorem \ref{main}]

\noindent {\sc Proof of \eqref{clt4}}. 
In view of \eqref{ineq2} we infer
\begin{eqnarray*}
\mu\sup_{0\leq s\leq T}\,|Y_{k,3}(st)|&\leq& \sup_{0\leq s\leq
T}\,\int_0^{st}\bigg|U_{k-1}(y)-\frac{y^{k-1}}{(k-1)!\mu^{k-1}}\bigg|{\rm
d}y\\&\leq& \sup_{0\leq s\leq T}\,\int_0^{st}
\sum_{i=0}^{k-2}\binom{k-1}{i} \frac{y^ic^{k-1-i}}{i!\mu^i}{\rm
d}y\\&\leq& \sum_{i=0}^{k-2}\binom{k-1}{i}
\frac{(Tt)^{i+1}c^{k-1-i}}{(i+1)!\mu^i}=O(t^{k-1})
\end{eqnarray*}
for all $T>0$. This proves \eqref{clt4}.  

\noindent {\sc Proof of \eqref{probab}}. It suffices to check
that, for integer $k\geq 2$,
\begin{equation}\label{inter}
\lim_{t\to\infty}t^{-(k-1/2)}Y_{k,1}(t)=0\quad\text{a.s.}
\end{equation}
To this end, we pick $\delta\in (1,2)$ and note that for each
$t\ge 0$, there exists $m\in\N_0$ such that $t\in
[m^\delta,(m+1)^\delta)$ and
\begin{eqnarray*}
t^{-(k-1/2)}Y_{k,1}(t) &\le& m^{-\delta(k-1/2)}\sum_{i\ge
1}\big(Y^{(i)}_{k-1}((m+1)^\delta-S_i)-U_{k-1}((m+1)^\delta-S_i)
\1_{\{S_i\le (m+1)^\delta\}}\big)\\
&+& m^{-\delta(k-1/2)}\sum_{i\ge
1}\big(U_{k-1}((m+1)^\delta-S_i)-U_{k-1}(m^\delta-S_i)\big)\1_{\{S_i\le
m^\delta\}}\\&+&m^{-\delta(k-1/2)}\sum_{i\ge
1}U_{k-1}((m+1)^\delta-S_i)\1_{\{m^\delta<S_i\le
(m+1)^\delta\}}\\&\leq&
m^{-\delta(k-1/2)}Y_{k,1}((m+1)^\delta)\\&+&m^{-\delta(k-1/2)}((U((m+1)^\delta-m^\delta)+1)U_{k-2}((m+1)^\delta)N(m^\delta)\\&+&
U_{k-1}((m+1)^\delta-m^\delta)N((m+1)^\delta)))
\end{eqnarray*}
where $U_0(t):=1$ for $t\geq 0$. For the last inequality we have
used monotonicity of the functions $U_i$, $i\in\mn$ and the
following estimate which is essentially based on subadditivity and
monotonicity of $U+1$:
\begin{eqnarray*}
U_i(t+s)-U_i(t)&=&\int_{[0,\,t]}(U(t+s-z)-U(t-z)){\rm
d}U_{i-1}(z)+\int_{(t,\,t+s]} U(t+s-z){\rm d}U_{i-1}(z)\\&\leq&
(U(s)+1)U_{i-1}(t)+U(s)(U_{i-1}(t+s)-U_{i-1}(t))\\&\leq&
(U(s)+1)U_{i-1}(t+s)
\end{eqnarray*}
for $t,s\geq 0$ and $i\geq 2$.

Similarly,
\begin{eqnarray*}
t^{-(k-1/2)}Y_{k,1}(t)&\ge& (m+1)^{-\delta
(k-1/2)}Y_{k,1}(m)\\&-&(m+1)^{-\delta(k-1/2)}((U((m+1)^\delta-m^\delta)+1)U_{k-2}((m+1)^\delta)N(m^\delta)\\&+&U_{k-1}((m+1)^\delta-m^\delta)N((m+1)^\delta)).
\end{eqnarray*}
By the strong law of large numbers for the renewal processes and
Lemma \ref{aux1} $N(m)\sim \mu^{-1}m$ a.s.\ and, for $j\in\mn$,
$U_j(m)\sim \mu^{-j}(j!)^{-1}m^j$ as $m\to\infty$, respectively,
whence, as $m\to\infty$,
$$m^{-\delta(k-1/2)}((U((m+1)^\delta-m^\delta)+1)U_{k-2}((m+1)^\delta)N(m^\delta)~\sim~\frac{\delta}{(k-2)!\mu^k}\frac{1}{m^{1-\delta/2}}\quad\text{a.s.}$$
and
$$m^{-\delta(k-1/2)}U_{k-1}((m+1)^\delta-m^\delta)N((m+1)^\delta)~\sim ~\frac{\delta^{k-1}}{(k-1)!\mu^k}\frac{1}{m^{k-(1+\delta/2)}}\quad
\text{a.s.}$$ Since $\delta<2$ and $k\geq 2$, the right-hand sides
of the last two relations converge to zero a.s. Hence,
\eqref{inter} is a consequence of
\begin{equation}\label{prw_to_zero_as11}
\lim_{\N\ni m\to\infty} m^{-\delta(k-1/2)}Y_{k,1}(m^\delta)\ =\
0\quad\text{a.s.}
\end{equation}

By Markov's inequality in combination with \eqref{ineq4}
$\Prob\{|Y_{k,1}(m^\delta)|>m^{\delta(k-1/2)}\gamma\}=O(m^{-\delta})$
as $m\to\infty$ for all $\gamma>0$ which entails
\eqref{prw_to_zero_as11} by the Borel-Cantelli lemma.

\noindent {\sc Proof of \eqref{clt2}}. We already know that the
distributions of the coordinates in \eqref{clt2} are tight. Thus,
it remains to check weak convergence of finite-dimensional
distributions, that is, for all $n\in\mn$, all $0\leq
s_1<s_2<\ldots<s_n<\infty$ and all integer $j\geq 2$
\begin{equation}\label{fd1}
\bigg(\frac{Y_1^\ast(s_it)}{a_1(t)},\frac{Y_{k,2}(s_it)}{a_k(t)}\bigg)_{2\leq
k\leq j,\,1\leq i\leq n }~ \todistrt~ 
(R_k(s_i))_{1\leq k\leq j,\,1\leq i\leq n},
\end{equation}
where $Y_1^\ast(t):=Y_1(t)-\mu^{-1}t$ and
$a_k(t):=\sqrt{\sigma^2\mu^{-2k-1}t^{2k-1}}/(k-1)!$ for $k\in\mn$
(recall that $0!=1$). If $s_1=0$ we have
$Y_1^\ast(s_1t)=Y_{k,2}(s_1t)=R_i(s_1)=0$ a.s.\ for $k\geq 2$ and
$i\in\N$. Hence, in what follows we assume that $s_1>0$.

By Theorem 3.1 on p.~162 in \cite{Gut:2009}
$$\frac{N(t\cdot)-\mu^{-1}(\cdot)}{\sqrt{\sigma^2 \mu^{-3}t}}~\toweakt~ B$$ 
in the $J_1$-topology on $D$. By Skorokhod's representation
theorem there exist versions $\widehat{N}$ and $\widehat{B}$ such
that
\begin{equation}\label{cs}
\lim_{t\to\infty}\sup_{0\leq y\leq
T}\bigg|\frac{\widehat{N}(ty)-\mu^{-1}ty}{\sqrt{\sigma^2\mu^{-3}t}}-\widehat{B}(y)\bigg|=0\quad\text{a.s.}
\end{equation}
for all $T>0$. This implies that \eqref{fd1} is equivalent to
\begin{equation}\label{fd2}
\bigg(\frac{(k-1)!\mu^{k-1} \widehat{V}_k
(t,s_i)}{t^{k-1}}\bigg)_{1\leq k\leq j,\,1\leq i\leq n}~
\todistrt~ (R_k(s_i))_{1\leq k\leq j,\,1\leq i\leq
n},
\end{equation}
where, for $t,y\geq 0$, $\widehat{V}_1(t,y):=\widehat{B}(y)$ and
$\widehat{V}_k(t,y):=\int_{(0,\,y]}\widehat{B}(x){\rm
d}_x(-U_{k-1}(t(y-x))$, $k\geq 2$. As far as the coordinates
involving $\widehat{V}_1$ are concerned the equivalence is an
immediate consequence of \eqref{cs}. As for the other coordinates,
integration by parts yields, for $s>0$ fixed and $k\geq 2$,
\begin{eqnarray*}
\int_{[0,\,st]}\frac{U_{k-1}(st-x)}{t^{k-1}}{\rm
d}_x\frac{\widehat{N}(x)-\mu^{-1}x}{\sqrt{\sigma^2\mu^{-3}t}}&=&\int_{(0,\,s]}\bigg(\frac{\widehat{N}(tx)-\mu^{-1}tx}{\sqrt{\sigma^2\mu^{-3}t}}
-\widehat{B}(x)\bigg){\rm
d}_x\frac{-U_{k-1}(t(s-x))}{t^{k-1}}\\&+&\int_{(0,\,s]}\widehat{B}(x){\rm
d}_x\frac{-U_{k-1}(t(s-x))}{t^{k-1}}.
\end{eqnarray*}
Denoting by $J(t)$ the first term on the right-hand side, we infer
$$|J(t)|\leq \sup_{0\leq y\leq s}
\bigg|\frac{\widehat{N}(ty)-\mu^{-1}ty}{\sqrt{\sigma^2\mu^{-3}t}}-\widehat{B}(y)\bigg|
(t^{-(k-1)}U_{k-1}(st))$$ which tends to zero a.s.\ as
$t\to\infty$ in view of \eqref{cs} and Lemma \ref{aux1} which
implies that $\lim_{t\to\infty}
t^{-(k-1)}U_{k-1}(st)=s^{k-1}/((k-1)!\mu^{k-1})$.

For $t,y\geq 0$, set $V_1(t,y):=B(y)$ and
$V_k(t,y):=\int_{(0,\,y]}B(x){\rm d}_x(-U_{k-1}(t(y-x))$, $k\geq
2$. We note that \eqref{fd2} is equivalent to
\begin{equation}\label{fd3}
\bigg(\frac{(k-1)!\mu^{k-1} V_k (t,s_i)}{t^{k-1}}\bigg)_{1\leq
k\leq j,\,1\leq i\leq n}~\todistrt 
~(R_k(s_i))_{1\leq k\leq j,\,1\leq i\leq n}
\end{equation}
because the left-hand sides of \eqref{fd2} and \eqref{fd3} have
the same distribution. Both the limit and the converging vectors
in \eqref{fd3} are Gaussian. Hence, it suffices to prove that
\begin{eqnarray}\label{cova}
\lim_{t\to\infty} t^{-(k+l-2)}\E
V_k(t,s)V_l(t,u)&=&\frac{1}{(k-1)!(l-1)!\mu^{k+l-2}}\E
R_k(s)R_l(u)\\&=&\frac{1}{(k-1)!(l-1)!\mu^{k+l-2}}\int_0^{s\wedge
u}(s-y)^{k-1}(u-y)^{l-1}{\rm d}y\notag
\end{eqnarray}
for $k,l\in\mn$ and $s,u>0$. We only consider the cases where
$0<s\leq u$ and $k,l\geq 2$, the case $s>u$ being similar and the
cases where $k$ or/and $l$ is/are equal to $1$ being simpler.

We start by writing
\begin{eqnarray*}
\E V_k(t,s)V_l(t,u)&=&\int_0^s U_{k-1}(t(s-y))U_{l-1}(t(u-y)){\rm
d}y\\&=&\int_0^s
\bigg(U_{k-1}(t(s-y))-\frac{t^{k-1}(s-y)^{k-1}}{(k-1)!\mu^{k-1}}\bigg)U_{l-1}(t(u-y)){\rm
d}y\\&+&\frac{t^{k-1}}{(k-1)!\mu^{k-1}}\int_0^s
(s-y)^{k-1}\bigg(U_{l-1}(t(u-y))-\frac{t^{l-1}(u-y)^{l-1}}{(l-1)!\mu^{l-1}}\bigg){\rm
d}y\\&+& \frac{t^{k+l-2}}{(k-1)!(l-1)!\mu^{k+l-2}}\int_0^s
(s-y)^{k-1}(u-y)^{l-1}{\rm d}y.
\end{eqnarray*}
Denoting by $J_1(t)$ and $J_2(t)$ the first and the second summand
on the right-hand side, respectively, we infer with the help of
Lemma \ref{aux1}:
\begin{eqnarray*}
J_1(t)&\leq& \int_0^s
\sum_{i=0}^{k-2}\binom{k-1}{i}\frac{t^i(s-y)^i}{i!\mu^i}U_{l-1}(t(u-y)){\rm
d}y\\&\leq& U_{l-1}(tu) \sum_{i=0}^{k-2}\binom{k-1}{i}\frac{t^i
s^{i+1}}{(i+1)!\mu^i}=O(t^{k+l-3})
\end{eqnarray*}
as $t\to\infty$ because the sum is $O(t^{k-2})$ and
$U_{l-1}(tu)=O(t^{l-1})$. Arguing similarly we obtain
$J_2(t)=O(t^{k+l-3})$ as $t\to\infty$, and \eqref{cova} follows.
The proof of Theorem \ref{main} is complete.
\end{proof}

\section{Appendix}

Lemma \ref{renewal} is stated in a greater generality than we need
in the present paper because we believe that this result is of
some importance for the renewal theory.

\begin{Lemma}\label{renewal}

\noindent Assume that the distribution of $\xi$ is nondegenerate
and $\E\xi^p<\infty$ for some $p\geq 2$. Then $\E
|N(t)-U(t)|^p\sim \E|Z|^pt^{p/2}$ as $t\to\infty$, where $Z$ is a
normally distributed random variable with mean zero and variance
$\sigma^2\mu^{-3}$,  $\mu=\E \xi$ and $\sigma^2={\rm
Var}\,\xi$.
\end{Lemma}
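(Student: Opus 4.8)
The plan is to combine the distributional renewal central limit theorem with a uniform integrability argument.

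\emph{Reduction.} Applying the renewal central limit theorem (Theorem~3.1 on p.~162 in \cite{Gut:2009}, already invoked above) at time $1$ gives $(N(t)-t/\mu)/\sqrt{\sigma^2\mu^{-3}t}\todistrt \mathrm{normal}(0,1)$; since $|U(t)-t/\mu|\le c$ by \eqref{lord2}, this yields $(N(t)-U(t))/\sqrt t\todistrt Z$ with $Z$ as in the statement (so $\mathrm{Var}\,Z=\sigma^2\mu^{-3}$). Because uniform integrability of $\{|X_t|^p\}$ upgrades $X_t\todistrt X$ to $\E|X_t|^p\to\E|X|^p$, it suffices to prove that
$$
\lim_{\Lambda\to\infty}\ \sup_{t\ge 1}\ t^{-p/2}\,\E\big[\,|N(t)-U(t)|^p\,\1_{\{|N(t)-U(t)|>\Lambda\sqrt t\}}\,\big]=0.
$$

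\emph{Uniform integrability.} The basic tool is the identity $\{N(t)\ge n\}=\{S_n\le t\}$ (valid for every $n\in\mn$ since $(S_n)$ is increasing), which turns tail events of $N(t)$, after centering, into tail events of the centered sums $S_n-n\mu$; the key external input is then the classical convergence of moments in the central limit theorem for i.i.d.\ summands, namely that under $\E|\xi|^p<\infty$ with $p\ge 2$ the family $\{\,|S_n-n\mu|^p/n^{p/2}:n\in\mn\}$ is uniformly integrable (equivalently $\E|S_n-n\mu|^p/n^{p/2}\to\E|\mathrm{normal}(0,\sigma^2)|^p$; see e.g.\ B.~von Bahr, \emph{On the convergence of moments in the central limit theorem}, Ann.\ Math.\ Statist.\ \textbf{36} (1965)). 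Expanding the left‑hand expectation above by the layer‑cake formula, we have, for $\lambda\ge \Lambda\sqrt t$ and $t$ large,
$$
\Prob(N(t)-U(t)>\lambda)=\Prob(S_n\le t)\le \Prob\big(|S_n-n\mu|\ge n\mu-t\big),\qquad n=\lceil U(t)+\lambda\rceil,
$$
and analogously $\Prob(N(t)-U(t)<-\lambda)=\Prob(S_n>t)\le\Prob(|S_n-n\mu|\ge t-n\mu)$ with $n=\lceil U(t)-\lambda\rceil$; by \eqref{lord2} the threshold equals $\mu\lambda$ up to a bounded error in both cases. We split the range of $\lambda$ into two regimes. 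In the \emph{moderate} regime $\lambda\le c_1 t$ one has $n\asymp t$, so that $n^{1/2}\asymp t^{1/2}$ and a cut at level of order $\mu\lambda$ becomes a cut at level of order $\Lambda n^{1/2}$; the successive substitutions $\lambda\mapsto\lambda/\sqrt t$ and then $v\mapsto(\mathrm{const}\cdot v)^p$ collapse the normalized layer‑cake integral into $\mathrm{const}\cdot\sup_n\E[\,|S_n-n\mu|^p n^{-p/2}\,\1_{\{|S_n-n\mu|>\mathrm{const}\cdot\Lambda\sqrt n\}}\,]$, which tends to $0$ as $\Lambda\to\infty$ by the uniform integrability quoted above. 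In the \emph{far} regime, which concerns $N(t)\gtrsim t$ and (for fixed $\Lambda$) is relevant only when $\Lambda\sqrt t\lesssim t$, i.e.\ for large $t$, we use positivity of the increments: pick $a>0$ with $p_0:=\Prob(\xi\ge a)\in(0,1)$, so that $S_n\ge a\sum_{i=1}^n\1_{\{\xi_i\ge a\}}$ and therefore $\Prob(N(t)\ge n)=\Prob(S_n\le t)\le\mathrm{e}^{-np_0/8}$ for $n\ge 2t/(ap_0)$ by a Chernoff bound for the binomial; this makes the far contribution bounded by the tail of a convergent integral, which vanishes as $\Lambda\to\infty$.

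\emph{Main obstacle.} The crux is that $p$ is exactly the critical moment order: a naive Markov bound $\Prob(|S_n-n\mu|>x)\le x^{-p}\E|S_n-n\mu|^p$ never suffices, since it loses precisely the gap between polynomial and sub‑Gaussian tail decay at exponent $p$ and would force the spurious extra assumption $\E|\xi|^{p+\delta}<\infty$. The genuine ingredient is thus the uniform integrability of $\{|S_n-n\mu|^p/n^{p/2}\}$, whose standard proof truncates the summands at a level $\varepsilon_n\sqrt n$ with $\varepsilon_n\to 0$ and combines a Bernstein‑type estimate for the bounded part with an $o(n^{p/2})$ bound for the discarded part — this is the heart of the matter. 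The secondary difficulty is the bookkeeping above: one must ensure that the single cutoff $\Lambda\sqrt t$ for $N(t)-U(t)$ corresponds, uniformly over the relevant range of indices $n$, to a cutoff of order $\Lambda\sqrt n$ for $S_n-n\mu$, which is what forces the moderate/far dichotomy. With these in hand, the reduction in the first paragraph gives $\E|N(t)-U(t)|^p\sim\E|Z|^p t^{p/2}$ as $t\to\infty$; in particular, for $p=2$ this recovers $\mathrm{Var}\,N(t)\sim\sigma^2\mu^{-3}t$.
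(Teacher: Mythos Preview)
Your approach is genuinely different from the paper's, and considerably more elaborate. The paper's proof is essentially a five-line reduction: Theorem~8.4 on p.~98 of \cite{Gut:2009} already gives $\E|N(t)-\mu^{-1}t|^p\sim\E|Z|^p\,t^{p/2}$ under $\E\xi^p<\infty$, so the entire hard analytic content is outsourced. What remains is to pass from the centering $\mu^{-1}t$ to $U(t)$, for which the paper uses Lorden's bound \eqref{lord} (so $|U(t)-\mu^{-1}t|\le c$), the elementary inequality $(a+b)^p\le a^p+p2^{p-1}(ab^{p-1}+a^{p-1}b)+b^p$ together with $\E|X|\le(\E|X|^p)^{1/p}$ for the upper bound, and the CLT for $N(t)$ combined with Fatou's lemma for the lower bound. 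Your proof, by contrast, is a direct uniform-integrability argument that in effect re-derives Gut's moment convergence theorem from the uniform integrability of $\{|S_n-n\mu|^p/n^{p/2}:n\in\mn\}$ (von Bahr). What you gain is self-containedness; what you lose is brevity.

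There is, however, a gap in the moderate-regime step that deserves attention. After your substitutions the layer-cake integral has the form $\int_\Lambda^{c_1\sqrt t} pv^{p-1}\Prob(|Y_{n(v)}|>c'v)\,dv$ with $Y_m=(S_m-m\mu)/\sqrt m$ and $n(v)=\lceil U(t)+v\sqrt t\rceil$ depending on the integration variable. Your claim that this ``collapses'' into $\mathrm{const}\cdot\sup_n\E[|Y_n|^p\1_{\{|Y_n|>c'\Lambda\}}]$ is not immediate: taking the supremum over $n$ inside the integral and then applying the layer-cake formula to $v\mapsto\sup_n\Prob(|Y_n|>c'v)$ does not produce a truncated $p$-th moment of any single $Y_n$, and bounding instead via Markov's inequality applied to $|Y_{n(v)}|^p\1_{\{|Y_{n(v)}|>c'\Lambda\}}$ reproduces exactly the divergent $\int v^{-1}\,dv$ that you yourself flag as the crux. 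One standard way to close this is to abandon the layer-cake in favour of the first-passage identity $N(t)+1-\mu^{-1}t=-\mu^{-1}\big(S_{N(t)+1}-(N(t)+1)\mu\big)+\mu^{-1}(S_{N(t)+1}-t)$, and to control the stopped centred sum via Burkholder--Davis--Gundy and the overshoot via $\E\xi^p<\infty$; this is effectively Gut's route, which explains why the paper simply cites his result rather than reproving it.
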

\begin{proof}
Theorem 8.4 on p.~98 in \cite{Gut:2009} states the result holds
with $\mu^{-1}t$ replacing $U(t)$. Using the inequality (see
p.~282 in \cite{Gut:1974}) $(a+b)^p\leq
a^p+p2^{p-1}(ab^{p-1}+ab^{p-1})+b^p$ for $a, b\geq 0$ together
with $\E|X|\leq (\E|X|^p)^{1/p}$ yields
\begin{eqnarray*}
\E|N(t)-U(t)|^p&\leq&\E|N(t)-\mu^{-1}t|^p+p2^{p-1}(\E|N(t)-\mu^{-1}t|^p)^{1/p}(U(t)-\mu^{-1}t)^{p-1}\\&+&p2^{p-1}\E|N(t)-\mu^{-1}t|^{p-1}
(U(t)-\mu^{-1}t)+(U(t)-\mu^{-1}t)^p.
\end{eqnarray*}
Recalling \eqref{lord} we arrive at
$\lim\sup_{t\to\infty}t^{-p/2}\E|N(t)-\mu^{-1}t|^p\leq \E|Z|^p$.
The converse inequality for the lower limit follows from the
central limit theorem for $N(t)$, formula \eqref{lord} and Fatou's
lemma.
\end{proof}
\begin{Rem}
It is worth stating explicitly that when $p>2$ the assumption
$\E\xi^p<\infty$ in Lemma \ref{renewal} cannot be dispensed with.
According to Remark 1.2 in \cite{Iksanov+Marynych+Meiners:2016},
there exist distributions of $\xi$ such that $\E\xi^2<\infty$ and
$\lim_{t\to\infty}\,t^{-p/2}\E|N(t)-U(t)|^p=\infty$ for every
$p>2$.
\end{Rem}

\vspace{1cm} \noindent   {\bf Acknowledgements}  A
part of this work was done while A.~Iksanov was visiting
M\"{u}nster in late November 2016. He gratefully acknowledges
hospitality and the financial support by DFG SFB 878 ``Geometry,
Groups and Actions''. The authors are grateful to Henning
Sulzbach and Alexander Marynych for useful discussions and  pointers 
to the literature. \quad
\footnotesize 

\normalsize


\begin{thebibliography}{99}
\footnotesize

\bibitem{backhausz_mori:2012} A.\ Backhausz and T. F. M\'{o}ri, Degree distribution in the lower levels of the uniform recursive tree. {\em Annales Univ. Sci. Budapest., Sect. Comp.} {\bf 36} (2012),  53-–62.

\bibitem{Billingsley:1968} P. Billingsley, \textit{Convergence of probability
measures}. Wiley, 1968.

\bibitem{Carlson+Nerman:1986} H. Carlsson and O. Nerman, An alternative proof of Lorden's renewal inequality. {\em Adv. Appl.
Probab.} {\bf 18} (1986), 1015--1016.

\bibitem{chauvin_etal:2001} B. Chauvin, M. Drmota, and J. Jabbour-Hattab, The profile of binary search
trees. \textit{Ann. Appl. Probab.} {\bf 11} (2001), 1042--1062.

\bibitem{chauvin_etal:2005} B. Chauvin, T. Klein, J.-F. Marckert, and A. Rouault, Martingales and profile
of binary search trees. \textit{Elect. J. Probab.} {\bf 10}
(2005),  420--435.

\bibitem{Drmota:2009} M.\ Drmota \textit{Random trees: An interplay between combinatorics and probability}. Springer, 2009.

\bibitem{drmota_etal:2008} M. Drmota, S. Janson, and R. Neininger. A functional limit theorem for the
profile of search trees. \textit{Ann. Appl. Probab.} {\bf 18}
(2008) 288--333.

\bibitem{Fuchs+Hwang+Neininger:2006} M. Fuchs, H.-K.~ Hwang and R. Neininger, Profiles of random
trees: limit theorems for random recursive trees and binary search
trees. {\em Algorithmica}. {\bf 46} (2006), 367--407.

\bibitem{Glynn+Whitt:1988} P.~W. Glynn and W. Whitt, Ordinary CLT and WLLN versions of
$L=\lambda W$. {\em Math. Oper. Res.} {\bf 13} (1988), 674--692.

\bibitem{Gut:1974} A. Gut, On the moments and limit distributions of some first passage times. {\em Ann. Probab.} {\bf 2} (1974), 277--308.

\bibitem{Gut:2009} A. Gut, \textit{Stopped random walks. Limit theorems and
applications}. 2nd Edition, Springer, 2009.


\bibitem{Iksanov:2013} A. Iksanov, Functional limit theorems for renewal shot noise
processes with increasing response functions. {\em Stoch. Proc. Appl.} {\bf 123} (2013), 1987--2010.

\bibitem{Iksanov:2017} A. Iksanov, \textit{Renewal theory for perturbed random walks and similar processes}.
Probability and its applications, Birkh\"{a}user, 2016.

\bibitem{Iksanov+Marynych+Meiners:2016} A. Iksanov, A. Marynych and M. Meiners, Moment convergence of first-passage times in
renewal theory. {\em Stat. Probab. Letters}. {\bf 119} (2016), 134--143

\bibitem{Iksanov+Marynych+Meiners:2017} A. Iksanov, A. Marynych and M. Meiners, Asymptotics of random processes with
immigration I: Scaling limits. {\em Bernoulli}. {\bf 23} (2017),
1233--1278.

\bibitem{jabbour:2001}
 J. Jabbour-Hattab, Martingales and large deviations for binary search trees.
\textit{Random Struct. Algor.} 19 (2001), 112--127.


\bibitem{kab_mar_sulz:2017}
Z. Kabluchko, A. Marynych, H. Sulzbach, General Edgeworth expansions with applications to profiles of random trees. \textit{Ann. Appl. Probab.}
{\bf 27} (2017), 3478--3524.


\bibitem{Pittel:1994} B. Pittel, Note on the heights of random recursive trees and random $m$-ary
search trees. {\em Random Struct. Algor.} {\bf 5} (1994),
337--347.









\end{thebibliography}
\end{document}